\newtheorem{theorem}{╥хюЁхьр}
\newtheorem{lemma}{╦хььр}
\newtheorem{definition}{╬яЁхфхыхэшх}
\title{Immersions of graphs to the projective plane}
\author{M.\,A.~Ivashkovskii}
\date{}
\begin{document}

\maketitle

{\bf Abstract.}
Immersions of graphs to the projective plane are studied. A classification of immersions up to regular homotopy is obtained. A complete invariant of immersions up to regular homotopy is constructed. The case of graphs immersions to any compact surface different from the projective plane was known.

{\bf Key words:}
regular immersion of a graph, regular homotopy of immersions, Reidemeister moves, self-intersection number.

\bigskip
\bigskip
\begin{center}
╧╬├╨╙╞┼═╚▀ ├╨└╘╬┬ ┬ ╧╨╬┼╩╥╚┬═╙▐ ╧╦╬╤╩╬╤╥▄ \bigskip \\
╠.\,└.~╚тр°ъютёъшщ
\end{center}

\begin{abstract}
╚ёёыхфє■Єё  яюуЁєцхэш  уЁрЇют т яЁюхъЄштэє■ яыюёъюёЄ№. ╧юыєўхэр ъырёёшЇшърЎш  яюуЁєцхэшщ ё ЄюўэюёЄ№■ фю Ёхуєы Ёэющ уюьюЄюяэюёЄш. ╧юёЄЁюхэ яюыэ√щ шэтрЁшрэЄ яюуЁєцхэшщ ё ЄюўэюёЄ№■ фю Ёхуєы Ёэющ уюьюЄюяэюёЄш. ╤ыєўрщ яюуЁєцхэшщ уЁрЇют т ы■сє■ ъюьяръЄэє■ яютхЁїэюёЄ№, юЄышўэє■ юЄ яЁюхъЄштэющ яыюёъюёЄш, с√ы шчтхёЄхэ.

{\bf ╩ы■ўхт√х ёыютр:}
Ёхуєы Ёэюх яюуЁєцхэшх уЁрЇр, Ёхуєы Ёэр  уюьюЄюяш  яюуЁєцхэшщ, фтшцхэш  ╨хщфхьхщёЄхЁр, ўшёыю ёрьюяхЁхёхўхэш .
\end{abstract}

\bigskip

╧єёЄ№ фрэ ёт чэ√щ уЁрЇ $G$ (тючьюцэю, шьх■∙шщ яхЄыш ш ъЁрЄэ√х ЁхсЁр) ё т√фхыхээющ эр эхь тхЁ°шэющ $v$. 
╨рёёьюЄЁшь яюуЁєцхэшх $\gamma: G \looparrowright M$ уЁрЇр $G$ т ёт чэюх ъюьяръЄэюх уырфъюх фтєьхЁэюх ьэюуююсЁрчшх $M$ (юяЁхфхыхэшх 1 эшцх). ╥ЁхсєхЄё  яюыєўшЄ№ ъырёёшЇшърЎш■ тёхї тючьюцэ√ї яюуЁєцхэшщ ё ЄюўэюёЄ№■ фю Ёхуєы Ёэющ уюьюЄюяэюёЄш (юяЁхфхыхэшх 2 эшцх). ┬ эрёЄю ∙хщ чрьхЄъх шчырурхЄё  Ёх°хэшх [1] ¤Єющ чрфрўш т ёыєўрх, ъюуфр $M=\mathbb R P^2$ --- яЁюхъЄштэр  яыюёъюёЄ№, ш яюёЄЁюхэ яюыэ√щ шэтрЁшрэЄ яюуЁєцхэшщ уЁрЇр $G$ т яЁюхъЄштэє■ яыюёъюёЄ№ т ЄхЁьшэрї шэфхъёр ёрьюяхЁхёхўхэш  ъЁшт√ї яю ьюфєы■ 2.
╧юыэ√щ шэтрЁшрэЄ яюуЁєцхэшщ уЁрЇр $G$ т ы■сє■ яютхЁїэюёЄ№ $M\ne\mathbb R P^2$ с√ы яюёЄЁюхэ ─.\,└.~╧хЁь ъют√ь [2] т ЄхЁьшэрї ўшёхы тЁр∙хэш  ъЁшт√ї.

╧хЁхщфхь ъ Єюўэ√ь ЇюЁьєышЁютърь. 
┴єфхь яЁхфяюырурЄ№, ўЄю уЁрЇ $G$ ёюёЄюшЄ шч юфэющ тхЁ°шэ√ ш $n$ ЁхсхЁ (яхЄхы№) $e_i$, $i=1,\dots,n$, Є.х.\  ты хЄё  сєъхЄюь $n$ юъЁєцэюёЄхщ. ╤ыєўрщ яЁюшчтюы№эюую уЁрЇр ыхуъю ётюфшЄё  ъ ¤Єюьє ёыєўр■ [2].
┬√схЁхь ш ЇшъёшЁєхь ярЁрьхЄЁшчрЎш■ эр ърцфюь ЁхсЁх $e_i$ уЁрЇр $G$.

\begin{definition}
╬ЄюсЁрцхэшх $\gamma:G \to M$ эрчютхь {\em яюуЁєцхэшхь} уЁрЇр $G$ т яютхЁїэюёЄ№ $M$ (ш юсючэрўшь ўхЁхч $\gamma:G \looparrowright M$), хёыш хую юуЁрэшўхэшх эр ы■сюх чрьъэєЄюх ЁхсЁю $\overline{e_i}$  ты хЄё  Ёхуєы Ёэющ ъЁштющ (юЄэюёшЄхы№эю ярЁрьхЄЁшчрЎшш $t\mapsto(\gamma|_{e_i})(t)$, $t\in[0,1]$, юЄтхўр■∙хщ чрфрээющ ярЁрьхЄЁшчрЎшш ЁхсЁр $e_i$), ш ъЁюьх Єюую $2n$ ърёрЄхы№э√ї тхъЄюЁют $\frac{d}{dt}|_{t=0}(\gamma|_{\overline{e_i}}),-\frac{d}{dt}|_{t=1}(\gamma|_{\overline{e_i}})$, $i=1,\dots,n$, т Єюўъх $\gamma(v)$ яюярЁэю эхёюэряЁртыхэ√, уфх $v$ --- тхЁ°шэр, т ъюЄюЁющ ёїюф Єё  яхЄыш уЁрЇр $G$.
\end{definition}

\begin{definition}
╤хьхщёЄтю яюуЁєцхэшщ $\gamma_u: G\looparrowright M$, $u\in[0,1]$, уЁрЇр т яютхЁїэюёЄ№ эрчютхь {\em Ёхуєы Ёэющ уюьюЄюяшхщ}, хёыш юэю  ты хЄё  уюьюЄюяшхщ т юс√ўэюь ёь√ёых (Є.х.\ 
юЄюсЁрцхэшх $\Gamma:G\times[0,1]\to M$, $\Gamma(x,u)=\gamma_u(x)$, эхяЁхЁ√тэю), ш юуЁрэшўхэшх ¤Єющ уюьюЄюяшш эр ы■сюх чрьъэєЄюх ЁхсЁю $\overline{e_i}$ чрфрхЄё  $C^\infty$-уырфъшь юЄюсЁрцхэшхь $[0,1]\times[0,1]\to M$, $(t,u)\mapsto(\gamma_u|_{\overline{e_i}})(t)$, $t,u\in[0,1]$.
─тр яюуЁєцхэш  $\gamma_0,\gamma_1$ эрчютхь {\em Ёхуєы Ёэю уюьюЄюяэ√ьш} (юсючэрўхэшх $\gamma_0 \overset{\text{\it reg}}{\sim} \gamma_1$), хёыш шї ьюцэю ёюхфшэшЄ№ Ёхуєы Ёэющ уюьюЄюяшхщ $\gamma_u: G\looparrowright M$, $u\in[0,1]$.
\end{definition}

╬яш°хь яюёЄЁюхэшх яюыэюую шэтрЁшрэЄр Ёхуєы Ёэющ уюьюЄюяэюёЄш яюуЁєцхэшщ.

\begin{definition}
╬сючэрўшь $\gamma|_{e_i}=:e'_1$, $i=1,\dots,n$. ╚ч тхЁ°шэ√ $v':=\gamma(v)$ уЁрЇр $G':=\gamma(G)$ т√їюф Є яюыєЁхсЁр $e'_1,\dots,e'_n$ ш т эхх тїюф Є яюыєЁхсЁр $(e'_1)^{-1},\dots,(e'_n)^{-1}$. 
╧єёЄ№ $or$ --- ыюъры№эр  юЁшхэЄрЎш  ърёрЄхы№эющ яыюёъюёЄш $T_{\gamma(v)}M$ т Єюўъх $\gamma(v)$.
┬√схЁхь юсїюф тхЁ°шэ√ $v'$, ёюуырёютрээ√щ ё юЁшхэЄрЎшхщ $or$. ┴єфхь т√яшё√трЄ№ яюыєЁхсЁр $e'_i,(e'_i)^{-1}$ т яюЁ фъх шї яю тыхэш  тю тЁхь  юсїюфр. ╨рёёьюЄЁшь ёююЄтхЄёЄтє■∙шщ эрсюЁ шч $e_i,e_i^{-1}$ (Є.х.\ ёююЄтхЄёЄтє■∙хх ёыютю т рыЇртшЄх $\{e_1,\dots,e_n,e_1^{-1},\dots,e_n^{-1}\}$) ё ЄюўэюёЄ№■ фю Ўшъышўхёъющ яхЁхёЄрэютъш ¤ыхьхэЄют эрсюЁр эрчютхь {\em Ўшъышўхёъшь яюЁ фъюь яюыєЁхсхЁ т тхЁ°шэх}, юЄтхўр■∙шь яюуЁєцхэш■ $\gamma$ ш ыюъры№эющ юЁшхэЄрЎшш $or$.
╧Ёш шчьхэхэшш ыюъры№эющ юЁшхэЄрЎшш яюыєўшь фЁєующ Ўшъышўхёъшщ яюЁ фюъ; яюыєўхээє■ эхєяюЁ фюўхээє■ ярЁє Ўшъышўхёъшї яюЁ фъют эрчютхь {\em Ўшъышўхёъшь яюЁ фъюь яюыєЁхсхЁ т тхЁ°шэх} фы  яюуЁєцхэш  $\gamma$.
\end{definition}

╚ьххь сшхъЎш■ ьхцфє ьэюцхёЄтюь тёхтючьюцэ√ї Ўшъышўхёъшї яюЁ фъют фы  юёэр∙хээ√ї яюуЁєцхэшщ ш ьэюцхёЄтюь яхЁхёЄрэютюъ $\Sigma_{2n-1}$ яюЁ фър $2n-1$. ╚чьхэхэшх ыюъры№эющ юЁшхэЄрЎшш фрхЄ фхщёЄтшх (ЄЁштшры№эюх яЁш $n=1$, ётюсюфэюх яЁш $n>1$) уЁєяя√ $\mathbb Z_2$ эр ьэюцхёЄтх $\Sigma_{2n-1}$. ╧юыєўрхь сшхъЎш■ ьхцфє ьэюцхёЄтюь тёхтючьюцэ√ї Ўшъышўхёъшї яюЁ фъют фы  яюуЁєцхэшщ ш яЁюёЄЁрэёЄтюь юЁсшЄ $\Sigma_{2n-1}/\mathbb Z_2$.

╥ръ ъръ ЇєэфрьхэЄры№эр  уЁєяяр яЁюхъЄштэющ яыюёъюёЄш рсхыхтр, юэр эх чртшёшЄ юЄ т√сюЁр срчшёэющ Єюўъш. ╧ю¤Єюьє сєфхь юяєёърЄ№ срчшёэє■ Єюўъє т юсючэрўхэшш ЇєэфрьхэЄры№эющ уЁєяя√. 

\begin{definition}
[{\it шэфхъё ёрьюяхЁхёхўхэш } чрьъэєЄющ Ёхуєы Ёэющ ъЁштющ эр яютхЁїэюёЄш]
╧єёЄ№ $\gamma:[0,1]\looparrowright M$ --- чрьъэєЄр  Ёхуєы Ёэр  ъЁштр  (с√Є№ ьюцхЄ, шьх■∙р  ``шчыюь'' т Єюўъх $\gamma(0)=\gamma(1)$), ш т Єюўъх $\gamma(0)$ ЇшъёшЁютрэр юЁшхэЄрЎш  ърёрЄхы№эющ яыюёъюёЄш $T_{\gamma(0)}M$, яЁшўхь тхъЄюЁ√ $\dot\gamma(0)$ ш $\dot\gamma(1)$ эх  ты ■Єё  яЁюЄштюяюыюцэю эряЁртыхээ√ьш. ╧Ёхфяюыюцшь, ўЄю тёх Єюўъш ёрьюяхЁхёхўхэш  ¤Єющ ъЁштющ  ты ■Єё  ЄЁрэётхЁёры№э√ьш (¤Єюую ьюцэю фюсшЄ№ё  ьрыющ фхЇюЁьрЎшхщ ъЁштющ). ╧єёЄ№ $(t_1,t_2)$ --- Єюўър ёрьюяхЁхёхўхэш  ъЁштющ $\gamma$, Є.х.\ $\gamma(t_1)=\gamma(t_2)=:A$, яЁшўхь $0\leq t_1<t_2\leq 1$ ш $(t_1,t_2)\ne(0,1)$. ╧хЁхэхёхь юЁшхэЄрЎш■ тфюы№ ъЁштющ $\gamma|_{[0,t_1]}$ т Єюўъє $A$. ┬юч№ьхь ЁхяхЁ $(\dot\gamma(t_1),\dot\gamma(t_2))$ ш ёЁртэшь хую юЁшхэЄрЎш■ ё яхЁхэхёхээющ юЁшхэЄрЎшхщ. ┬ чртшёшьюёЄш юЄ ёюуырёютрээюёЄш яЁшяш°хь Єюўъх ёрьюяхЁхёхўхэш  $(t_1,t_2)$ чэръ $+1$ шыш $-1$. ╧ЁюёєььшЁютрт ¤Єш ўшёыр яю тёхь Єюўърь ёрьюяхЁхёхўхэш , яюыєўшь {\it шэфхъё ёрьюяхЁхёхўхэш } ъЁштющ $\gamma$ юЄэюёшЄхы№эю чрфрээющ ыюъры№эющ юЁшхэЄрЎшш, юсючэрўшь хую ўхЁхч $I(\gamma)\in\mathbb Z$. ╟рьхЄшь, ўЄю ўшёыю $I(\gamma)\ mod \ 2\in\mathbb Z_2$ эх чртшёшЄ юЄ т√сюЁр ыюъры№эющ юЁшхэЄрЎшш.
\end{definition}

\begin{theorem}
╧єёЄ№ $\gamma_1,\gamma_2$ --- фтр яюуЁєцхэш  уЁрЇр $G$ т яЁюхъЄштэє■ яыюёъюёЄ№ $\mathbb RP^2$. ▌Єш яюуЁєцхэш  Ёхуєы Ёэю уюьюЄюяэ√ (Є.х.\ $\gamma_1 \overset{\text{\it reg}}{\sim} \gamma_2$) Єюуфр ш Єюы№ъю Єюуфр, ъюуфр $Inv(\gamma_1) = Inv(\gamma_2)$.
╟фхё№ ЇєэъЎшюэры $Inv:\{\gamma:G\looparrowright \mathbb RP^2\}\to(\Sigma_{2n-1}/\mathbb Z_2)\times \{0,1\}^{2n}$ юяЁхфхы хЄё  ЇюЁьєырьш
$$
Inv_1(\gamma):=\Big( \mbox{Ўшъышўхёъшщ яюЁ фюъ яюыєЁхсхЁ фы  $\gamma$} \Big)\in\Sigma_{2n-1}/\mathbb Z_2,
$$ 
$$
Inv_2(\gamma):=\Big([\gamma|_{e_1}],\dots,[\gamma|_{e_n}]\Big) \in \Big(\pi_1 (\mathbb RP^2)\Big)^n \cong (\mathbb Z_2)^n=\{0,1\}^n,
$$
$$
Inv_3(\gamma):=\Big(I (\gamma|_{e_1}) \ mod \ 2,\dots,I (\gamma|_{e_n}) \ mod \ 2
\Big) \in (\mathbb Z_2)^n=\{0,1\}^n,
$$
$$
Inv(\gamma):=\Big(Inv_1(\gamma),Inv_2(\gamma),Inv_3(\gamma)\Big).
$$
\end{theorem}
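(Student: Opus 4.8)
\emph{Plan of proof.} I would prove the two implications separately: that a regular homotopy forces equality of $Inv$ (invariance), and that equality of $Inv$ produces a regular homotopy (completeness); throughout I use the standing reduction to the case where $G$ is a bouquet of $n$ loops, the general case being handled as in [2]. For invariance I would treat the three components in turn. The $2n$ tangent rays at $\gamma_u(v)$ stay pairwise non-collinear and vary continuously with $u$, so the cyclic order in which they occur around $\gamma_u(v)$ — read up to a cyclic shift and up to the reflection coming from the choice of local orientation (the $\mathbb Z_2$) — is locally constant in $u$, hence constant; this gives invariance of $Inv_1$. A regular homotopy is in particular a free homotopy of each loop $\gamma_u|_{\overline{e_i}}$, and since $\pi_1(\mathbb{RP}^2)=\mathbb Z_2$ is abelian, free and based classes agree, so $Inv_2$ is preserved. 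Finally, restricting the homotopy to a single edge gives a regular homotopy of the immersed circle $\gamma_u|_{\overline{e_i}}$, and the circle case [1] says exactly that $I(\cdot)\bmod 2$ is invariant under such homotopies; together with the remark in Definition 4 that this residue is independent of the auxiliary orientation, this gives invariance of $Inv_3$.

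For completeness, assume $Inv(\gamma_1)=Inv(\gamma_2)$ and bring both immersions to one normal form, beginning at the vertex. Since $\mathbb{RP}^2$ is homogeneous, $\mathrm{Diff}(\mathbb{RP}^2)$ is connected and acts transitively on tangent directions, and a local orientation at a point can be reversed by an ambient isotopy dragging that point once around an orientation-reversing loop, the equality $Inv_1(\gamma_1)=Inv_1(\gamma_2)$ (an element of $\Sigma_{2n-1}/\mathbb Z_2$) lets me make $\gamma_1$ and $\gamma_2$ coincide on a disk $U$ about $v$: the same image $p$, the same $2n$ tangent rays, in the same cyclic order. All subsequent homotopies are kept fixed on a smaller collar of $v$, so this normalization is never undone.

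Next I would straighten the loops one at a time. Using that a generic regular homotopy of curves on a surface decomposes, away from the fixed collar, into ambient isotopy together with the Reidemeister-type moves $\Omega_2$ (birth/death of a bigon) and $\Omega_3$ (triple point) — the single-crossing move $\Omega_1$ being excluded, since creating or deleting one self-crossing would require passing through a cusp and would alter the rotation number — I would, via the circle classification [1], carry each $\gamma|_{\overline{e_i}}$ to a standard model loop $C_i$ depending only on the $i$-th entries of $Inv_2$ and $Inv_3$: a contractible or non-contractible standard curve carrying the minimal ($0$ or $1$) number of self-crossings permitted by its $I\bmod 2$. The extra regular homotopies available in $\mathbb{RP}^2$, where a strand may be slid through the crosscap, are precisely what collapse the integer self-intersection count to its residue mod $2$.

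The main obstacle is the interaction between distinct loops: I must show that mutual crossings carry no further invariant. For a pair $i<j$ the parity of the number of intersection points of $\gamma|_{\overline{e_i}}$ and $\gamma|_{\overline{e_j}}$ equals the $\mathbb Z_2$-intersection number of their classes in $H_1(\mathbb{RP}^2;\mathbb Z_2)$, and is therefore already fixed by $Inv_2$; I would reduce the crossings by $\Omega_2$/Whitney bigon moves to this forced minimum, and show that a single forced crossing of two non-contractible loops sits in a standard position, using that two projective lines meet in exactly one point and that any two such local pictures differ by an ambient isotopy. The delicate points are to perform the bigon removals without introducing cusps, to keep the immersion condition and the fixed collar at $v$ intact throughout, and thereby to rule out any surviving linking-type invariant. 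Once every loop sits in its model $C_i$ and all mutual crossings are standardized, $\gamma_1$ and $\gamma_2$ have been carried to the very same normal form; concatenating the homotopy for $\gamma_1$ with the reverse of the one for $\gamma_2$ then yields $\gamma_1\overset{\text{\it reg}}{\sim}\gamma_2$, completing the proof.
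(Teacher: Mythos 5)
Your proposal is correct and follows essentially the same route as the paper: invariance of each component by continuity together with the Reidemeister--Whitney argument, and completeness by first matching the stars at the vertex using $Inv_1$ and then normalizing each loop separately with the moves $\Omega_2$, $\Omega_3$ and the slide through the crosscap (the paper's ``dragging through infinity''), which changes the self-intersection number by $\pm 2$ and so reduces it to its residue mod $2$. The only divergence is your extra discussion of mutual crossings between distinct loops, which is superfluous: Definition~1 imposes no transversality condition on crossings, so distinct edges may be pushed through one another freely during a regular homotopy and each loop is normalized independently, exactly as in the paper's proof of Lemma~3.
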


╥хюЁхьр ёыхфєхЄ шч ыхьь 1 ш 3, яЁштхфхээ√ї эшцх.
─ы  ърцфющ яхЄыш $e_i$, $i=1,\dots,n$, уЁрЇр $G$ тючьюцэ√ фтр ёыєўр .
╧хЁт√щ --- ъюуфр яхЄы  $\gamma_1|_{e_i}$ ёЄ уштрхьр. ┬ЄюЁющ --- ъюуфр ¤Єр яхЄы  эхёЄ уштрхьр.

\begin{figure}[htbp]
\includegraphics[width=0.20\textwidth]{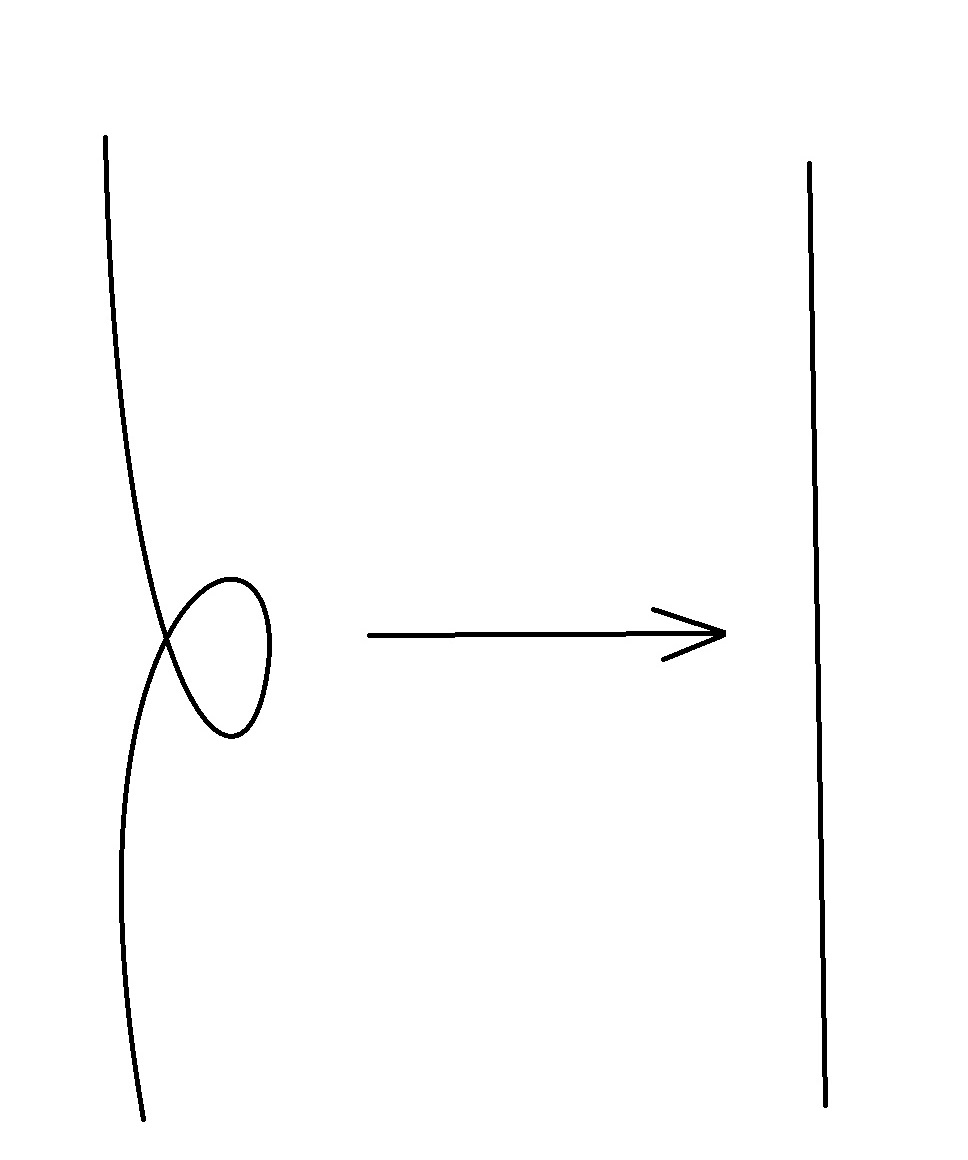}
\includegraphics[width=0.30\textwidth]{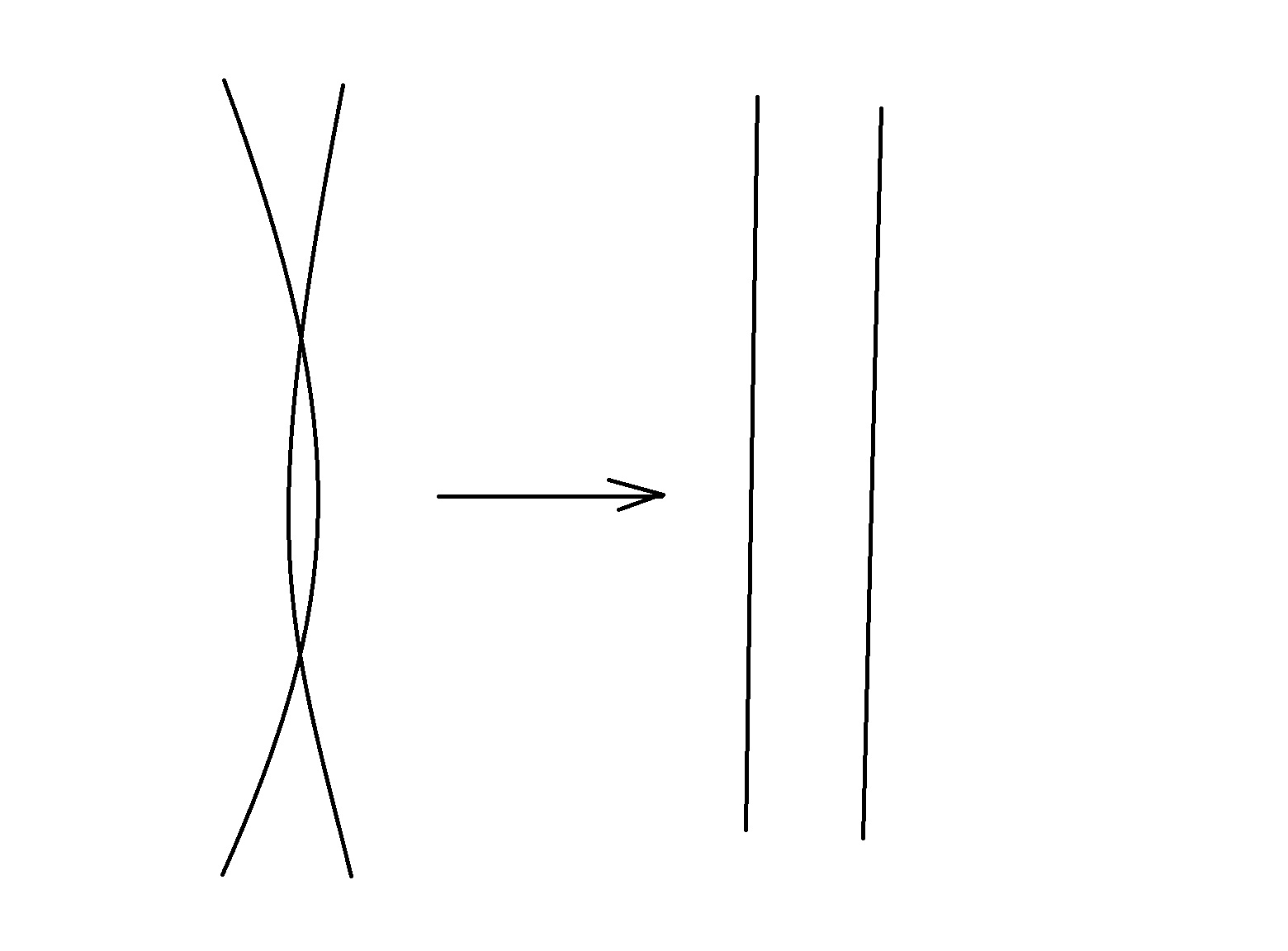} 
\includegraphics[width=0.36\textwidth]{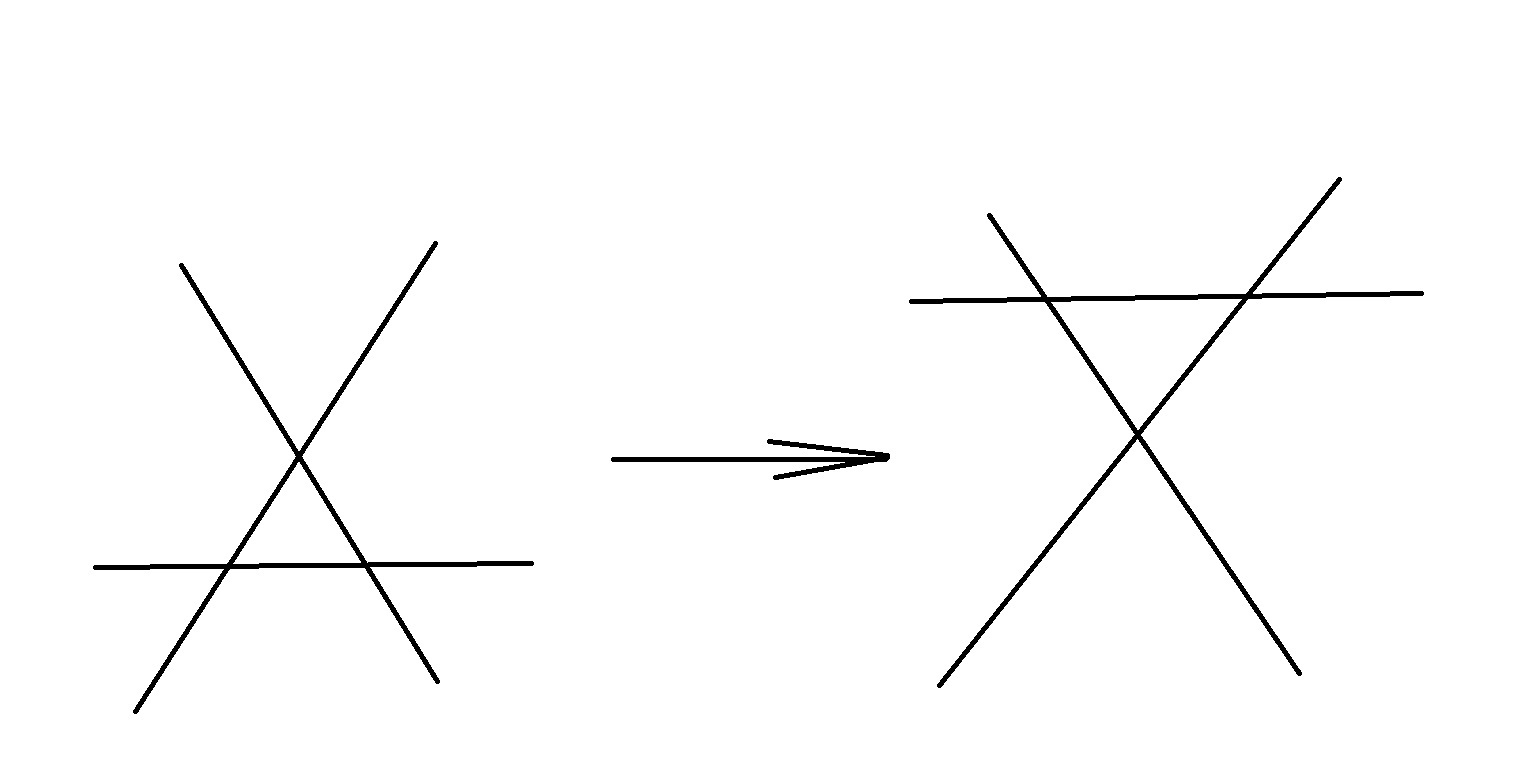}
\caption {─тшцхэш  ╨хщфхьхщёЄхЁр Єшяют 1, 2 ш 3} \label {fig:Reid}
\end {figure}

╟рьъэєЄє■ ъЁштє■ $\gamma:S^1\to M$ эрчютхь {\em ъЁштющ юс∙хую яюыюцхэш }, хёыш юэр Ёхуєы Ёэр (ш, тючьюцэю, шьххЄ ``шчыюь'' т тхЁ°шэх), тёх хх Єюўъш ёрьюяхЁхёхўхэш  (ёь.\ юяЁхфхыхэшх 4)  ты ■Єё  ЄЁрэётхЁёры№э√ьш, ш юсЁрч√ $\gamma(t_1)=\gamma(t_2)$ хх Єюўхъ ёрьюяхЁхёхўхэш  $(t_1,t_2)$ яюярЁэю Ёрчышўэ√.
╤юуырёэю шчтхёЄэющ ЄхюЁхьх ╨хщфхьхщёЄхЁр [3], фтх чрьъэєЄ√х ъЁшт√х юс∙хую яюыюцхэш  $\gamma_0,\gamma_1:[0,1]\to M$ эр яютхЁїэюёЄш $M$ уюьюЄюяэ√ Єюуфр ш Єюы№ъю Єюуфр, ъюуфр ёє∙хёЄтєхЄ уюьюЄюяш , Ёрчырур■∙р ё  т яюёыхфютрЄхы№эюёЄ№ фтшцхэшщ ╨хщфхьхщёЄхЁр 1-ую, 2-ую ш 3-ую Єшяют 
(Ёшё.~\ref {fig:Reid}).

\begin{lemma}
┼ёыш $\gamma_1 \overset{\text{\it reg}}{\sim} \gamma_2$, Єю $Inv(\gamma_1)=Inv(\gamma_2)$.
\end{lemma}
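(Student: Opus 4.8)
The plan is to fix a family $g_u:G\looparrowright\mathbb{RP}^2$, $u\in[0,1]$, realizing the regular homotopy, with $g_0=\gamma_1$ and $g_1=\gamma_2$, and to check separately that each of the three components $Inv_1$, $Inv_2$, $Inv_3$ takes equal values at $u=0$ and $u=1$; since $Inv$ is their triple, this yields $Inv(\gamma_1)=Inv(\gamma_2)$. The three checks are essentially independent: $Inv_2$ and $Inv_1$ are soft (continuity and compactness), whereas $Inv_3$ is the substantial part and needs a general-position analysis of the homotopy.

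For $Inv_2$, note that for each edge $e_i$ the family $u\mapsto g_u|_{e_i}$ is a homotopy of loops whose base point $g_u(v)$ moves along a path in $\mathbb{RP}^2$; this is a free homotopy of loops. Free homotopy classes of loops are in bijection with conjugacy classes of $\pi_1(\mathbb{RP}^2)$, and since $\pi_1(\mathbb{RP}^2)\cong\mathbb{Z}_2$ is abelian these conjugacy classes are single elements. Hence $[g_u|_{e_i}]\in\mathbb{Z}_2$ is independent of $u$, so $Inv_2(g_0)=Inv_2(g_1)$. For $Inv_1$, transport the local orientation $or$ continuously along the path $u\mapsto g_u(v)$. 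The $2n$ tangent directions $\frac{d}{dt}|_{t=0}(g_u|_{\overline{e_i}})$ and $-\frac{d}{dt}|_{t=1}(g_u|_{\overline{e_i}})$ live on the circle of directions of $T_{g_u(v)}\mathbb{RP}^2$; by the definition of an immersion of $G$ they stay pairwise distinct for every $u$, and they depend continuously on $u$. A continuously varying family of pairwise distinct points on a circle cannot change its cyclic order without a collision, so the cyclic order with respect to the transported orientation is constant in $u$. Passing to the quotient by the $\mathbb{Z}_2$-action that records the choice of $or$ --- which is exactly why the invariant lives in $\Sigma_{2n-1}/\mathbb{Z}_2$ and absorbs any orientation reversal occurring along a non-contractible path in the non-orientable surface $\mathbb{RP}^2$ --- gives $Inv_1(g_0)=Inv_1(g_1)$.

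The main work is $Inv_3$. The key simplification is that modulo $2$ the signed self-intersection number reduces to the parity of the number of double points, $I(\gamma|_{e_i})\equiv\#\{\text{double points of }\gamma|_{e_i}\}\pmod 2$, because each contributing sign equals $1$ mod $2$; in particular the sign conventions, and with them the non-orientability of $\mathbb{RP}^2$, are irrelevant mod $2$. Now fix $e_i$. The family $g_u|_{e_i}$ is a regular homotopy of a closed immersed curve, and the distinctness of the $2n$ directions for all $u$ keeps its base point admissible (the two velocities at the base point are never codirectional), so the curve stays of the type required in the definition of the self-intersection number throughout. By the theorem quoted just before this lemma, the generic immersed curves $g_0|_{e_i}$ and $g_1|_{e_i}$ are joined by a finite sequence of the moves $1$, $2$, $3$ of Figure \ref{fig:Reid}. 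It therefore suffices to inspect the effect of each move on the number of double points: the self-tangency moves create or annihilate double points in pairs and so change the count by $\pm 2$, while the triple-point move merely reconfigures three double points and leaves the count unchanged. In all cases the parity is preserved, hence $I(g_0|_{e_i})\equiv I(g_1|_{e_i})\pmod 2$ for every $i$, i.e. $Inv_3(g_0)=Inv_3(g_1)$.

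The main obstacle is concentrated in this last step, specifically in the claim that no admissible move changes the number of double points by an odd amount. This is exactly the assertion that a genuine first-Reidemeister (kink) event, which would alter the count by $1$, cannot occur in a regular homotopy of an immersed curve: creating or cancelling a single kink forces the curve to pass through a cusp and thus to leave the class of immersions, so the admissible moves are the even self-tangencies and the triple point only. Carrying this out rigorously requires two things that I would check with care: first, reading off from the precise normal forms of moves $1$, $2$, $3$ that each changes the double-point count by an even number; and second, controlling the moving base point $g_u(v)$ at the vertex, so that applying the move theorem edge by edge is legitimate and no spurious double point is produced at $g_u(v)$ where the distinct edge-ends meet.
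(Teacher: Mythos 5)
Your proof is correct and follows essentially the same route as the paper: each of the three components of $Inv$ is checked separately, with $Inv_1$ preserved by continuity of the cyclic order of the $2n$ directions (the $\mathbb{Z}_2$-quotient absorbing the orientation ambiguity), $Inv_2$ because a regular homotopy is in particular a homotopy of each loop $\gamma|_{e_i}$, and $Inv_3$ because the only generic events in a regular homotopy of immersed curves are self-tangencies and triple points, which change the double-point count by $\pm 2$ or $0$ while the cusp (kink) event is excluded by the immersion condition. Your write-up is more detailed than the paper's short proof (which does not dwell on the base-point and orientation-transport issues you flag), but the decomposition and the key observations are the same.
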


\begin{proof}
┼ёыш яюуЁєцхэш  $\gamma_1$ ш $\gamma_2$ Ёхуєы Ёэю уюьюЄюяэ√, Єю юэш ыюъры№эю Ёхуєы Ёэю уюьюЄюяэ√ т эхъюЄюЁющ юъЁхёЄэюёЄш тхЁ°шэ√ $v$. ╟эрўшЄ, Ўшъышўхёъшщ яюЁ фюъ юЄэюёшЄхы№эю юЁшхэЄрЎшш ьхэ Є№ё  эх сєфхЄ.

┼ёыш яюуЁєцхэш  $\gamma_1$ ш $\gamma_2$ Ёхуєы Ёэю уюьюЄюяэ√, Єю фтх яхЄыш $\gamma_1|_{e_i}$ ш $\gamma_2|_{e_i}$ Ёхуєы Ёэю уюьюЄюяэ√, р яюЄюьє уюьюЄюяэ√. ╟эрўшЄ, юэш ышсю юсх ёЄ уштрхь√, ышсю юсх эхёЄ уштрхь√.

─ы  чрьъэєЄющ Ёхуєы Ёэющ ъЁштющ шэфхъё ёрьюяхЁхёхўхэш  $mod \ 2$ яЁш Ёхуєы Ёэющ уюьюЄюяшш ьхэ Є№ё  эх сєфхЄ.
▌Єю ёыхфєхЄ шч ``Ёхуєы Ёэюую рэрыюур'' ЄхюЁхь√ ╨хщфхьхщёЄхЁр [3]: шэфхъё ёрьюяхЁхёхўхэш  ъЁштющ ёюїЁрэ хЄё  яЁш фтшцхэш ї ╨хщфхьхщёЄхЁр тЄюЁюую ш ЄЁхЄ№хую Єшяют, р яюЄюьє ш яЁш ы■сющ Ёхуєы Ёэющ уюьюЄюяшш.

╥ръшь юсЁрчюь, хёыш $\gamma_1$ ш $\gamma_2$ Ёхуєы Ёэю уюьюЄюяэ√, Єю чэрўхэш  ттхфхээюую ЇєэъЎшюэрыр эр эшї ёютярфр■Є. ╥ю хёЄ№, ЇєэъЎшюэры шэтрЁшрэЄхэ юЄэюёшЄхы№эю Ёхуєы Ёэющ уюьюЄюяэюёЄш яюуЁєцхэшщ.
\end{proof}

╥хяхЁ№ яЁютхфхь фюърчрЄхы№ёЄтю т юсЁрЄэє■ ёЄюЁюэє.

\begin{lemma}{╦хььр 2.}
┼ёыш чрьъэєЄ√х Ёхуєы Ёэ√х ъЁшт√х $\gamma_0,\gamma_1:S^1\looparrowright M$ эр яютхЁїэюёЄш $M$ уюьюЄюяэ√, Єю $\gamma_0 \overset{\text{\it reg}}{\sim} \tilde\gamma_1$ 
фы  эхъюЄюЁющ Ёхуєы Ёэющ ъЁштющ $\tilde\gamma_1$, яюыєўр■∙хщё  шч ъЁштющ $\gamma_1$ яЁшсртыхэшхь эхъюЄюЁюую ъюышўхёЄтр ьрыхэ№ъшї яхЄхы№ (чртшЄъют). ┼ёыш яЁш ¤Єюь ъЁшт√х $\gamma_0,\gamma_1$ ёютярфр■Є эр эхъюЄюЁющ фєух $\delta\subset S^1$, Єю єърчрээр  Ёхуєы Ёэр  уюьюЄюяш  ьюцхЄ с√Є№ т√сЁрэр эхяюфтшцэющ эр фєух $\delta$.
\end{lemma}

\begin{proof}
╧ю ЄхюЁхьх ╨хщфхьхщёЄхЁр [3], ттшфє уюьюЄюяэюёЄш ъЁшт√ї, ёє∙хёЄтєхЄ яюёыхфютрЄхы№эюёЄ№ фтшцхэшщ ╨хщфхьхщёЄхЁр, яЁштюф ∙р  ъЁштє■ $\gamma_0$ ъ ъЁштющ $\gamma_1$. ┬√яюыэшь ¤Єє яюёыхфютрЄхы№эюёЄ№ фтшцхэшщ ёю ёыхфє■∙хщ ьюфшЇшърЎшхщ: ърцф√щ Ёрч, ъюуфр яЁхфяюырурхЄё  ёютхЁ°шЄ№ фтшцхэшх ╨хщфхьхщёЄхЁр Єшяр 1 (Ёюцфхэшх шыш єэшўЄюцхэшх чртшЄър), ь√ тьхёЄю хую т√яюыэхэш  сєфхь юёЄрты Є№ эр ьрыюь єўрёЄъх ъЁштющ ьрыхэ№ъшщ чртшЄюъ (ъюЄюЁ√щ сєфхЄ ёюїЁрэ Є№ё , цхёЄъю фтшур ё№ яю яютхЁїэюёЄш тьхёЄх ё ¤Єшь єўрёЄъюь ъЁштющ, т Єхўхэшх тёхщ яюёыхфє■∙хщ Ёхуєы Ёэющ уюьюЄюяшш).

\begin{figure}[htbp]
\begin{center}
\includegraphics[width=0.6\textwidth]{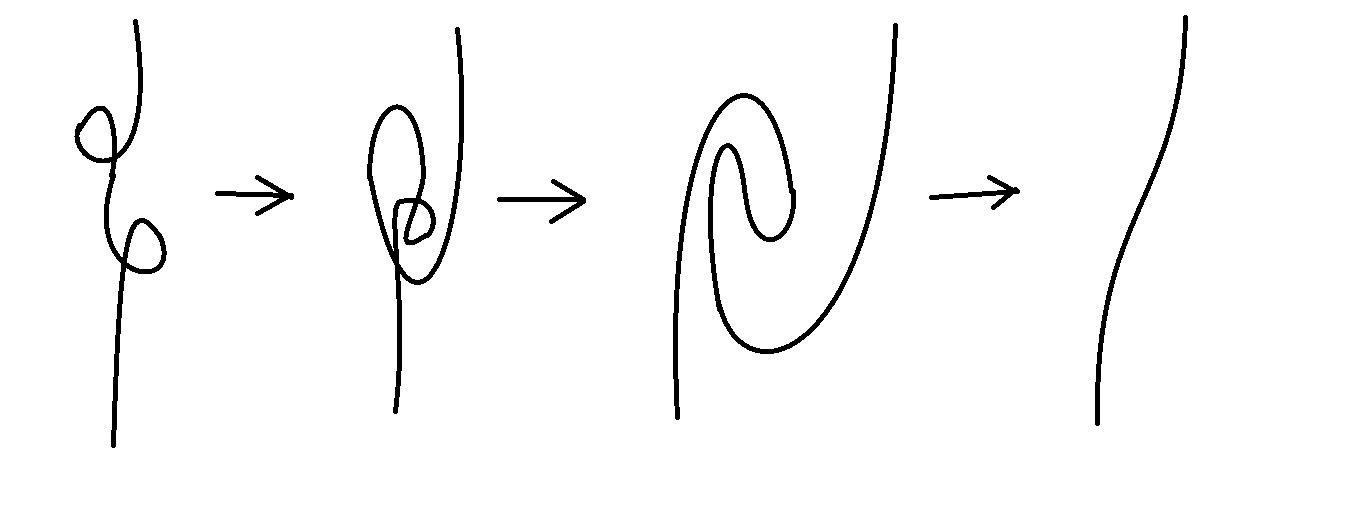}
\end{center}
\caption {┬чршьэюх єэшўЄюцхэшх ярЁ√ ьрыхэ№ъшї яхЄхы№} \label {fig:pair}
\end {figure}

┴юыхх яюфЁюсэю: хёыш фтшцхэшх ╨хщфхьхщёЄхЁр Єшяр 1 ёюёЄюшЄ т {\it єэшўЄюцхэшш} чртшЄър, Єю тьхёЄю хую єэшўЄюцхэш  ь√ ёюїЁрэшь хую (т Єхўхэшх тёхщ яюёыхфє■∙хщ Ёхуєы Ёэющ уюьюЄюяшш) т тшфх ьрыхэ№ъюую чртшЄър. └ хёыш фтшцхэшх ╨хщфхьхщёЄхЁр Єшяр 1 ёюёЄюшЄ т {\it Ёюцфхэшш} чртшЄър, Єю ь√ эхяюёЁхфёЄтхээю яхЁхф т√яюыэхэшхь ¤Єюую фтшцхэш  яюЁюфшь (ъръ эр 
Ёшё.~\ref {fig:pair}) Ёхуєы Ёэющ уюьюЄюяшхщ ярЁє сышчыхцр∙шї чртшЄъют яЁюЄштюяюыюцэ√ї чэръют, ш юфшэ шч эшї (эєцэюую чэрър) шёяюы№чєхь т ърўхёЄтх Ёюцфхээюую чртшЄър яЁш фтшцхэшш ╨хщфхьхщёЄхЁр, р фЁєующ чртшЄюъ ёюїЁрэшь (т Єхўхэшх тёхщ яюёыхфє■∙хщ Ёхуєы Ёэющ уюьюЄюяшш) т тшфх ьрыхэ№ъюую чртшЄър эр фрээюь єўрёЄъх ъЁштющ.

┬ Ёхчєы№ЄрЄх ь√ яюыєўшь Ёхуєы Ёэє■ уюьюЄюяш■, яЁхюсЁрчє■∙є■ ъЁштє■ $\gamma_0$ ъ эхъюЄюЁющ ъЁштющ $\tilde\gamma_1$ шёъюьюую тшфр, Є.х.\ яюыєўр■∙хщё  шч $\gamma_1$ фюсртыхэшхь эхёъюы№ъшї чртшЄъют.
\end{proof}

\begin{figure}[htbp]
\begin{center}
\includegraphics[width=0.45\textwidth]{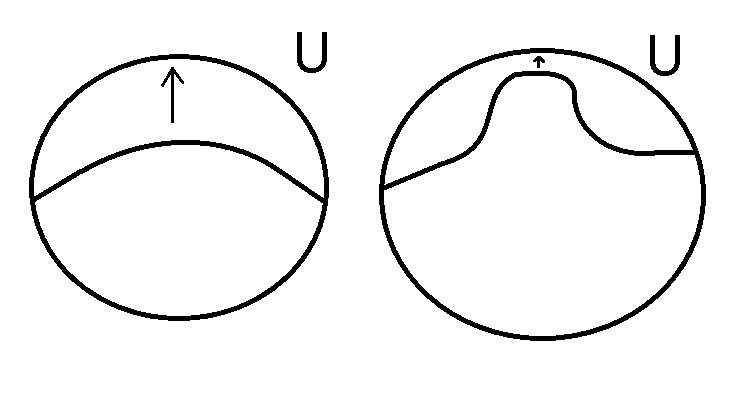}
\hspace*{2mm}
\includegraphics[width=0.45\textwidth]{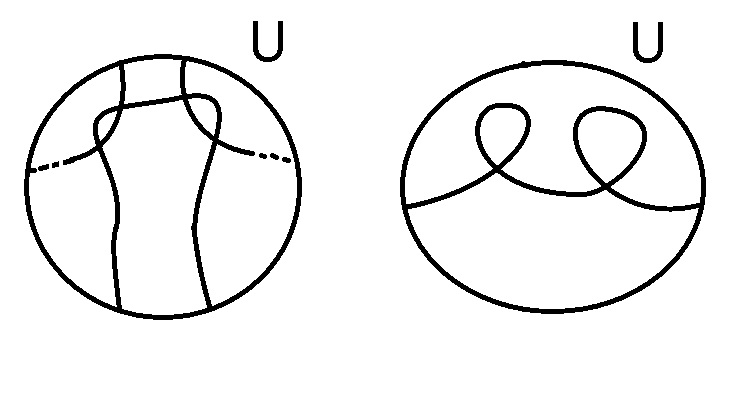}
\end{center}
\caption {``╧ЁюЄрёъштрэшх ўхЁхч схёъюэхўэюёЄ№''} \label {fig:protask}
\end {figure}

─ы  чрьъэєЄющ Ёхуєы Ёэющ ъЁштющ $\gamma$ эр $\mathbb RP^2$ ттхфхь фтшцхэшх, ъюЄюЁюх сєфхь эрч√трЄ№ \textit{яЁюЄрёъштрэшхь ўхЁхч схёъюэхўэюёЄ№}. ─ы  ¤Єюую яхЁхщфхь ъ эръЁ√Єш■ яЁюхъЄштэющ яыюёъюёЄш $\mathbb RP^2$ ёЇхЁющ $S^2$ ш яЁюфхЇюЁьшЁєхь эхъюЄюЁє■ яЁюёЄє■ фєує $\gamma|_{\delta}$, $\delta \not\ni v$, яЁш яюью∙ш Ёхуєы Ёэющ уюьюЄюяшш, эхяюфтшцэющ тэх $\delta$ ш яЁхюсЁрчє■∙хщ фєує $\gamma|_\delta$ т фєує тшфр $\gamma|_\delta$ ё фтєь  яхЄы ьш юфэюую чэрър, ъръ яюърчрэю эр 
Ёшё.~\ref {fig:protask}.
╚Єръ, ё яюью∙№■ фрээюую фтшцхэш  ьюцэю ьхэ Є№ шэфхъё ёрьюяхЁхёхўхэш  ы■сющ чрьъэєЄющ Ёхуєы Ёэющ ъЁштющ $\gamma$ т $\mathbb RP^2$ эр $+2$ шыш $-2$ яЁш яюью∙ш Ёхуєы Ёэющ уюьюЄюяшш, эхяюфтшцэющ т эхъюЄюЁющ юъЁхёЄэюёЄш ъюэЎют ъЁштющ.

\begin{lemma}
┼ёыш $Inv(\gamma_1)=Inv(\gamma_2)$, Єю $\gamma_1 \overset{\text{\it reg}}{\sim} \gamma_2$.
\end{lemma}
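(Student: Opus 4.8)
The plan is to reduce everything to the one-loop case, treated edge by edge, after first making $\gamma_1$ and $\gamma_2$ coincide near the vertex; assume throughout that $Inv(\gamma_1)=Inv(\gamma_2)$. First I would bring $\gamma_1(v)$ onto $\gamma_2(v)$ by an ambient isotopy of $\mathbb{RP}^2$ (a regular homotopy of the immersion). Since $Inv_1(\gamma_1)=Inv_1(\gamma_2)$, the two labeled cyclic orders of the $2n$ directions $\dot\gamma(0),-\dot\gamma(1)$ at the vertex agree in $\Sigma_{2n-1}/\mathbb{Z}_2$, i.e.\ either directly or after a global reversal. In the direct case a rotation of the tangent frame, realized by a local regular homotopy, makes the two vertex stars coincide as germs, matching $e_i$ with $e_i$ and each edge-end correctly. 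In the reversed case I first drag the vertex once around an orientation-reversing loop of $\mathbb{RP}^2$ and back: being a regular homotopy it preserves $Inv_2,Inv_3$ by the invariance already proved, while it reverses the local orientation and hence the oriented cyclic order, reducing to the direct case. After this step $\gamma_1$ and $\gamma_2$ agree on a neighborhood of the vertex, and on a small arc $\delta_i$ at each end of every edge.

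Now fix $i$ and work rel a neighborhood of the vertex and rel all other edges. Because $Inv_2(\gamma_1)=Inv_2(\gamma_2)$, the loops $\gamma_1|_{e_i}$ and $\gamma_2|_{e_i}$ represent the same class in $\pi_1(\mathbb{RP}^2)=\mathbb{Z}_2$, hence are homotopic. By the Whitney--Graustein-type Lemma 2, applied rel the arc $\delta_i$ where they already agree, there is a regular homotopy (fixing a neighborhood of the vertex and the other edges) from $\gamma_1|_{e_i}$ to a curve $\widetilde{\gamma_2|_{e_i}}$ obtained from $\gamma_2|_{e_i}$ by inserting a number of small kinks. I first cancel kinks of opposite sign against one another by Reidemeister-$1$ cancellations, which are regular homotopies; what remains is $|k|$ kinks of one sign, where $k$ denotes the signed kink count.

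Since each kink changes the self-intersection number by $\pm1$ and regular homotopy preserves it mod $2$, from $\gamma_1|_{e_i}\overset{\text{\it reg}}{\sim}\widetilde{\gamma_2|_{e_i}}$ one gets $I(\gamma_1|_{e_i})\equiv I(\gamma_2|_{e_i})+k \pmod 2$; the hypothesis $Inv_3(\gamma_1)=Inv_3(\gamma_2)$ then forces $k\equiv 0\pmod 2$, so $|k|$ is even. Using the operation of pulling an arc through the projective line described just before this statement, applied in a neighborhood of one kink loop, changes the self-intersection number there by $\pm2$ and hence flips that kink's sign; flipping $|k|/2$ of the remaining same-sign kinks and then cancelling the resulting opposite-sign pairs removes all of them. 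This yields $\widetilde{\gamma_2|_{e_i}}\overset{\text{\it reg}}{\sim}\gamma_2|_{e_i}$ rel the vertex, and composing gives $\gamma_1|_{e_i}\overset{\text{\it reg}}{\sim}\gamma_2|_{e_i}$ rel the vertex and rel the other edges. Carrying out this argument successively for $i=1,\dots,n$, each homotopy fixing a neighborhood of the vertex and the edges not being moved, assembles into a regular homotopy of the whole immersion from $\gamma_1$ to $\gamma_2$.

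The step I expect to be the main obstacle is the kink removal: one must check carefully that the pulling-through-the-projective-line operation is realizable by a regular homotopy supported near a single loop, so that it disturbs neither the vertex star nor the other edges, and that its sole effect is to change the local self-intersection number by exactly $\pm2$; only then does combining the parity of $k$ with the sign-flipping cancel all kinks cleanly. A secondary delicate point is the orientation-reversal case of the vertex normalization, where I must verify that dragging the vertex around an orientation-reversing loop leaves $Inv_2$ and $Inv_3$ unchanged while reversing the oriented cyclic order.
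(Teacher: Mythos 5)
Your proposal is correct and follows essentially the same route as the paper: align the vertex stars using $Inv_1$, treat each edge separately via Lemma~2 rel a neighbourhood of the vertex, cancel opposite-sign kinks, and remove the remaining same-sign kinks (even in number by the $Inv_3$ hypothesis) in pairs using the pulling-through-the-projective-line operation. The only organizational difference is that you connect $\gamma_1|_{e_i}$ to $\gamma_2|_{e_i}$ directly by one application of Lemma~2, whereas the paper reduces each loop to a normal form, splitting into the contractible and non-contractible cases according to $Inv_2$; the underlying mechanism is identical.
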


\begin{proof}
╤ютьхёЄшь Єюўъш $\gamma_1(v)=v'_1$ ш $\gamma_2(v)=v'_2$ Єръ, ўЄюс√ ёютярыш Ўшъышўхёъшх яюЁ фъш т тхЁ°шэх $v$ фы  яюуЁєцхэшщ $\gamma_1$ ш $\gamma_2$. ╥ръ ъръ Ўшъышўхёъшщ яюЁ фюъ юфшэ ш ЄюЄ цх, Єю ёрьш ¤Єш яюуЁєцхэш  ьюцэю ёютьхёЄшЄ№ т ьрыющ юъЁхёЄэюёЄш $U \subseteq G$ тхЁ°шэ√ $v$ яєЄхь Ёхуєы Ёэющ уюьюЄюяшш, эхяюфтшцэющ т тхЁ°шэх $v$.

╧єёЄ№ $U'\subset M=\mathbb R P^2$ --- ьрыр  ъЁєуютр  юъЁхёЄэюёЄ№ тхЁ°шэ√ $v'=v_1'=v_2'$ уЁрЇр $G_1'=\gamma_1(G)$. ╘шъёшЁєхь ыюъры№эє■ юЁшхэЄрЎш■ ърёрЄхы№эющ яыюёъюёЄш $T_{v'}\mathbb R P^2$.
╘шъёшЁєхь ы■сюх ЁхсЁю $e_i$ уЁрЇр $G$, $i=1,\dots,n$. ╬сючэрўшь ўхЁхч $I_i$ шэфхъё ёрьюяхЁхёхўхэш  $I(\gamma_1|_{e_i}) \ mod \ 2=I(\gamma_2|_{e_i}) \ mod \ 2\in\{0,1\}$, ъюЄюЁ√щ сєфхь ЁрёёьрЄЁштрЄ№ ъръ Ўхыюх ўшёыю. ┬ючьюцэ√ фтр ёыєўр .

{\em ╤ыєўрщ 1:} яхЄы  $\gamma_1|_{e_i}$ ёЄ уштрхьр.
╚ч-чр ёЄ уштрхьюёЄш яхЄыш $\gamma_1|_{e_i}$ ёє∙хёЄтєхЄ яюёыхфютрЄхы№эюёЄ№ фтшцхэшщ ╨хщфхьхщёЄхЁр Єшяют 2 ш 3, фхЇюЁьшЁє■∙шї ¤Єє яхЄы■ ъ эхъюЄюЁющ яхЄых $\tilde{\gamma}_1|_{e_i} \subset U' \subset \mathbb RP^2$, яЁшўхь ¤Єр фхЇюЁьрЎш  эхяюфтшцэр т эхъюЄюЁющ юъЁхёЄэюёЄш $V\subset U$ тхЁ°шэ√ $v$ уЁрЇр $G$.
└эрыюушўэю ёє∙хёЄтєхЄ яюёыхфютрЄхы№эюёЄ№ фтшцхэшщ ш фы  $\gamma_2$ Єрър , ўЄю $\tilde{\gamma}_2(e_i) \subset U'$.
╧хЄы  $\tilde\gamma_1|_{e_i}$ яю ыхььх 2 Ёхуєы Ёэю уюьюЄюяэр т фшёъх $U'$ яюыюцшЄхы№эю юЁшхэЄшЁютрээющ юъЁєцэюёЄш ё эхъюЄюЁ√ь ўшёыюь чртшЄъют, яЁшўхь ёююЄтхЄёЄтє■∙р  Ёхуєы Ёэр  уюьюЄюяш  эхяюфтшцэр эр $V\cap e_i$. 

╦■сє■ ярЁє ёюёхфэшї чртшЄъют, шьх■∙шї юфшэ чэръ, тчршьэю єэшўЄюцшь Ёхуєы Ёэющ уюьюЄюяшхщ, юсЁрЄэющ ъ ``яЁюЄрёъштрэш■ ўхЁхч схёъюэхўэюёЄ№'' (Ёшё.~\ref {fig:protask}).
└ ы■сє■ ярЁє ёюёхфэшї чртшЄъют ё яЁюЄштюяюыюцэ√ьш чэрърьш ьюцэю тчршьэю єэшўЄюцшЄ№ Ёхуєы Ёэющ уюьюЄюяшхщ (эхяюфтшцэющ эр $V\cap e_i$) т фшёъх $U'$, шчюсЁрцхээющ эр Ёшё.~\ref {fig:pair}. ╬ЄьхЄшь, ўЄю тёх єърчрээ√х уюьюЄюяшш эхяюфтшцэ√ эр $V\cap e_i$.
╥ръшь юсЁрчюь, яхЄы■ $\tilde\gamma_1|_{e_i}$ ьюцэю яЁхюсЁрчютрЄ№ ё яюью∙№■ Ёхуєы Ёэющ уюьюЄюяшш, эхяюфтшцэющ эр $V\cap e_i$, т яюыюцшЄхы№эю юЁшхэЄшЁютрээє■ юъЁєцэюёЄ№ ё 0 шыш 1 яюыюцшЄхы№э√ь чртшЄъюь. ╟рьхЄшь, ўЄю ъюышўхёЄтю чртшЄъют Ёртэю $I_i$ (Єръ ъръ шэфхъё ёрьюяхЁхёхўхэш  яю ьюфєы■ 2 Ёхуєы Ёэющ чрьъэєЄющ ъЁштющ ёюїЁрэ хЄё  яЁш тЄюЁюь ш ЄЁхЄ№хь фтшцхэш ї ╨хщфхьхщёЄхЁр, р яюЄюьє ш яЁш ы■сющ Ёхуєы Ёэющ уюьюЄюяшш). 
╧ю¤Єюьє ъ ¤Єюьє цх тшфє ьюцэю яЁхюсЁрчютрЄ№ ш яхЄы■ $\tilde\gamma_2|_{e_i}$.

╬Єё■фр ёыхфєхЄ, ўЄю яхЄыш $\gamma_1|_{e_i}$ ш $\gamma_2|_{e_i}$ Ёхуєы Ёэю уюьюЄюяэ√ ё яюью∙№■ Ёхуєы Ёэющ уюьюЄюяшш, эхяюфтшцэющ эр $V\cap e_i$.

{\em ╤ыєўрщ 2:} яхЄы  $\gamma_1|_{e_i}$ эхёЄ уштрхьр. ┬ фрээюь ёыєўрх яхЄы  $\gamma_1|_{e_i}$ яхЁхёхърхЄ ыхэЄє ╠хсшєёр $\mathbb R P^2\setminus U'$.
╧хЄы  $\gamma_1|_{e_i}$ яю ыхььх 2 Ёхуєы Ёэю уюьюЄюяэр эхъюЄюЁющ ъЁштющ, яхЁхёхўхэшх ъюЄюЁющ ё єърчрээ√ь ышёЄюь ╠хсшєёр  ты хЄё  яЁюёЄющ фєующ, р яхЁхёхўхэшх ё фшёъюь $U'$ шьххЄ тшф яЁюёЄющ фєуш ё эхъюЄюЁ√ь ъюышўхёЄтюь ьрыхэ№ъшї чртшЄъют. ╧Ёш ¤Єюь ёююЄтхЄёЄтє■∙р  Ёхуєы Ёэр  уюьюЄюяш  эхяюфтшцэр т ьрыющ юъЁхёЄэюёЄш $V\cap e_i$ тхЁ°шэ√ $v$.

┬чршьэю єэшўЄюцшь ярЁ√ ёюёхфэшї чртшЄъют ъръ т ёыєўрх 1 ё яюью∙№■ Ёхуєы Ёэющ уюьюЄюяшш, эхяюфтшцэющ эр $V\cap e_i$.
╥ю хёЄ№, т фрээюь ёыєўрх яюыєўрхь, ўЄю эхёЄ уштрхьє■ яхЄы■ $\gamma_1|_{e_i}$ ьюцэю яЁхюсЁрчютрЄ№ ё яюью∙№■ Ёхуєы Ёэющ уюьюЄюяшш, эхяюфтшцэющ эр $V\cap e_i$, т яхЄы■, яхЁхёхўхэшх ъюЄюЁющ ё ышёЄюь ╠хсшєёр $\mathbb R P^2\setminus U'$  ты хЄё  яЁюёЄющ фєующ, р яхЁхёхўхэшх ё фшёъюь $U'$ шьххЄ тшф яЁюёЄющ фєуш ё 0 шыш 1 яюыюцшЄхы№э√ь чртшЄъюь. ╩ръ ш т ёыєўрх 1 яюыєўрхь, ўЄю ъюышўхёЄтю чртшЄъют Ёртэю $I_i$.
╧ю¤Єюьє ъ ¤Єюьє цх тшфє ьюцэю яЁхюсЁрчютрЄ№ ш яхЄы■ $\gamma_2|_{e_i}$, ўЄю ш чртхЁ°рхЄ фюърчрЄхы№ёЄтю ыхьь√.
\end{proof}

└тЄюЁ сыруюфрЁхэ ┼.\,└.~╩єфЁ тЎхтющ чр яюёЄрэютъє чрфрўш ш яюыхчэ√х юсёєцфхэш .

%

%
%
%

\bigskip
\bigskip
\bigskip
Maxim A.~Ivashkovskii

Moscow State University (Moscow, Russia)

frank1581@yandex.ru

\end{document}